\newtheorem{theorem}{Theorem}[section]
\providecommand{\keywords}[1]
{
  \small	
  \textbf{\textit{Keywords---}} #1
}
\title{Gravitational billiards bouncing inside general domains - foci curves and confined domains}
\author{Daniel Jaud\\Orcid: 0000-0002-0163-7586\\
E-Mail: Daniel.Jaud.PhD@gmail.com\\
Gymnasium Holzkirchen}
\date{}
\begin{document}
\maketitle
\flushbottom

%\hrulefill\\ %Zeile mit Linie füllen

\begin{abstract}
\noindent A massive particle under the influence of a constant gravitational force that is bouncing inside an ideal reflecting mirror described by some function $f(x)$ is considered. For the associated flight trajectories we derive the parametric curves, named foci curves. All foci points of the parabolas for a given initial position and energy lie on these curves. From these foci curves the associated flight parabola envelopes are derived resulting, together with the mirror surface, in a confined domain for all possible particle trajectories in the non-periodic orbit case. The general results are briefly discussed and visualized for three concrete mirror surfaces.
\end{abstract}

\vspace*{0.4cm}
\keywords{billiards, envelope, gravity, foci, reflecting mirror, flight parabola, confined trajectories}

%\vspace*{0.7cm}
%\hrulefill\\ %Zeile mit Linie füllen
%\tableofcontents
%\hrulefill\\
%\newpage
%\hrulefill\\

\section{Introduction}\label{sec:Introduction}
In mathematical billiards the trajectory of a point-like particle inside a domain with ideal reflecting boundary is studied. During the last years such billiard systems, for particles propagating along straight lines before hitting the boundary and being reflected according to the law of reflection, have become of special interest. Generalizations to classical billiard type systems under the influence of a constant gravitational force e.g.  have also been studied via an analytic \cite{Anderson,Costa2015}, a geometric \cite{Masalovich2014,Masalovich2020} or a numerical approach \cite{Lehihet1986} for different scenarios. Also simple quantum mechanical systems under the influence of a gravitational force have recently been studied regarding e.g. the particle's probability distribution in \cite{Jaud2022}.

In the present work we study the confined domains for a point-like massive particle within an ideal reflecting mirror, whose boundary is described by some function $f(x)$, under the influence of a homogeneous gravitational field. We will derive a parametric expression for the curve on which all consecutive flight parabola foci points, starting from some initial focus, will lie. In particular we proof that this so called foci curve is totally determined by the geometry of the mirror boundary in which the particle is subjected to bounce. As a further result, we derive the parametric expressions for the two occurring envelopes of the flight parabolas, sharing the same foci curve and total energy, and show that these envelopes together with the mirror boundary confine the possible particle trajectory in the non-periodic orbit case.

The structure of this paper is as follows. In Section \ref{sec:generalities} general facts about the motion of a particle in a homogeneous gravitational field are motivated. Section \ref{sec:foci_curves} deals with the derivation of the parametric curve on which consecutive flight parabola foci lie. The associated confined domains are in general derived in Section \ref{sec:envelopes}. Three specific examples will be discussed in Section \ref{sec:examples} in order to illustrate the general results derived before.

\section{Generalities for billiards in a gravitational field}
\label{sec:generalities}

In this section we briefly review some general results on the parabolic motion of a massive particle subjected to a homogeneous gravitational field.

A particle starting from the origin of the coordinate system with some velocity vector $\vec{v}=(v_x,v_y)$ describes in the presence of a constant gravitational force a parabolic arc. The particle's trajectory depending on the traveling time can be decomposed in a motion along the $x-$ and $y-$direction separately according to
\begin{align}
x(t)&=v_xt,\\
y(t)&=-\frac{1}{2}gt^2+v_yt.
\end{align}
Eliminating time this results in associated trajectory
\begin{equation}
y(x)=-\frac{g}{2v_x^2}\cdot x^2 +\frac{v_y}{v_x}\cdot x,
\end{equation}
which by completing the square can be brought  in the form 
\begin{equation}
y(x)=-\frac{g}{2v_x^2}\cdot \left(x-\frac{v_xv_y}{g}\right)^2+\frac{v_y^2}{2g}.
\end{equation}

Comparing this to a general parabola given in focal form 
\begin{equation}
y=-\frac{1}{4f}\cdot (x-x_F)^2+y_F+f,
\end{equation}
\begin{figure}[htb]
\centering
\includegraphics[scale=0.8]{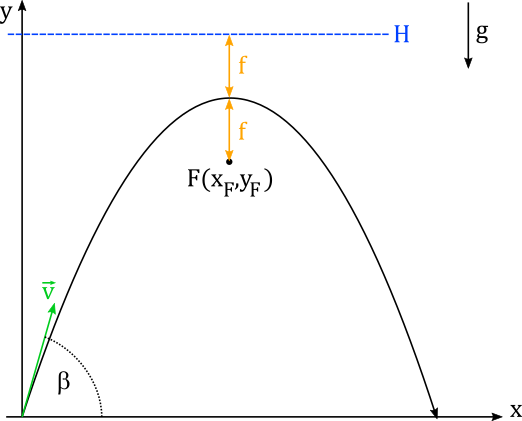}
\caption{Important quantities characterizing a possible flight parabola.}
\label{fig:parabola_generalities}
\end{figure}

see Fig. \ref{fig:parabola_generalities}, we conclude that the flight parabola (as we call it) has a focal length $f$ related to the velocity at the origin in the $x-$direction or equivalently described via the absolute value $v=|\vec{v}|$ and the angle $\beta$ made with the horizontal 
\begin{equation}
f=\frac{v_x^2}{2g}=\frac{v^2\cdot \cos^2(\beta)}{2g}.
\end{equation}
Similar the coordinates of the flight parabola focus $F$ are given by
\begin{equation}\label{eq:focus_parabola}
(x_F,y_F)=\left(\frac{v_xv_y}{g},\frac{v_y^2-v_x^2}{2g}\right)=\left(\frac{v^2\cdot \sin(2\beta)}{2g},-\frac{v^2\cdot \cos(2\beta)}{2g}\right).
\end{equation}

By conservation of energy it is easy to verify that all flight parabolas with the same total energy $E$ have a common directrix line $y=H=\frac{E}{mg}$, where $m$ is the mass of the particle (see Fig. \ref{fig:parabola_generalities}). In general the relation between the $y-$direction focus coordinate $y_F$, the directrix $H$ and the focal length $f$ reads 
\begin{equation}
H=y_F+2f.
\end{equation}
We will use the last equation in Section \ref{sec:envelopes} in order to derive the parametric curves describing the envelopes of the flight parabolas within a general reflecting mirror domain.

A further important result, e.g. derived in \cite{Masalovich2014,Masalovich2020}, states that the foci $F$ \& $F'$ of two flight parabolas connected to the same point of reflection $A$ (see Fig. \ref{fig:foci_circle}) lie on a common circle of radius $R_0=\frac{v^2}{2g}$. Here $v$ describes the absolute value of the velocity vectors of the two flight parabolas at the point of reflection $A$. Note that due to the law of reflection for the incident velocity $\vec{v}$ and excident velocity $\vec{v}'$ at $A$ holds $|\vec{v}|=|\vec{v}'|=v$.

\begin{figure}[htb]
\centering
\includegraphics[scale=1.17]{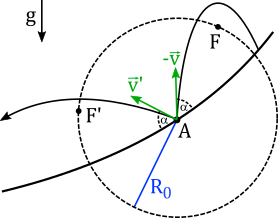}
\caption{Foci of consecutive flight parabolas lie on a common circle.}
\label{fig:foci_circle}
\end{figure}

We will make intensive use of this property in the upcoming section resulting in a formula for calculating flight parabola envelopes for general mirror boundaries.

As a last ingredient the law of reflection implies, for a particle hitting the boundary of a mirror with slope (at the point of reflection ) characterized by the angle $\alpha$ with respect to the horizontal, the following relation between the in- and outgoing directions of the flight parabolas (see Fig. \ref{fig:reflection})

\begin{equation}
\beta'=\pi+2\alpha -\beta.
\end{equation}

Plugging this relation into the expression for the focus point, see Eq. \eqref{eq:focus_parabola}, yields the associated linear mapping between $F$ and $F'$ according to

\begin{equation}
\begin{pmatrix}
x_{F'}\\
y_{F'}
\end{pmatrix}=\begin{pmatrix}
-\cos(4\alpha) & -\sin(4\alpha)\\
-\sin(4\alpha) & \cos(4\alpha)
\end{pmatrix}
\begin{pmatrix}
x_F\\
y_F
\end{pmatrix}.
\end{equation}
It is straight forward to verify that the linear map implies that two consecutive focus points for the flight parabolas related by the same point of reflection lie on a common straight line with slope
\begin{equation}
m=\frac{y_{F'}-y_F}{x_{F'}-x_F}=\tan(2\alpha).
\end{equation}

\begin{figure}[htb]
\centering
\includegraphics[scale=0.9]{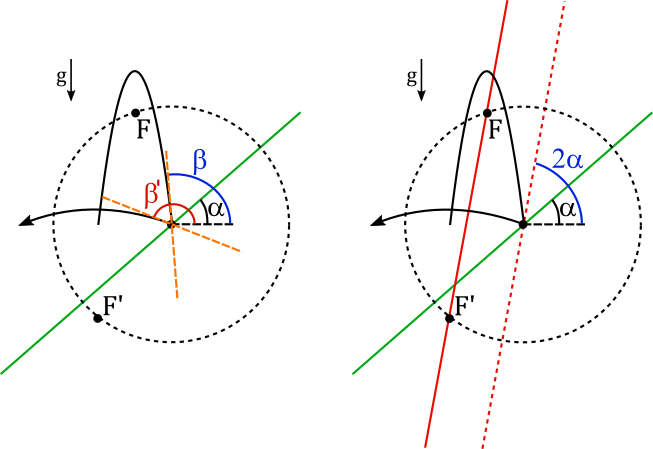}
\caption{Angles for law of reflection (\textit{left}). Flight parabola foci $F$ and $F'$ sharing the same point of reflection lie on a common line of angle $2\alpha$  if at the point of reflection the mirror tangent has an angle $\alpha$ with respect to the horizontal (\textit{right}).}
\label{fig:reflection}
\end{figure}

Concluding the consecutive focus point can geometrically be constructed by the intersection of a circle with radius $R_0=\frac{v^2}{2g}$, where $v$ is the absolute value of the velocity vectors at the point of reflection, and the straight line of slope $m=\tan(2\alpha)$ passing through $F$ (see Fig. \ref{fig:reflection}). Note that the slope of the intersection line does depend on the slope of the mirror at the point of reflection and thus varies for non-planar mirrors.

\section{The foci curves}\label{sec:foci_curves}

The goal of this section is to derive a parametric expression for the curve on which all foci of the flight parabola billiard system lie for a general mirror boundary described by some function $f(x)$. As reviewed in the last section two consecutive flight parabola foci lie along a common circle centered at the point of reflection. We are now considering the following setup: given an ideal reflecting mirror described by $f(x)$ and a point-like particle of mass $m$. If the particle has some total energy $E$ then at any given point of reflection along the mirror boundary $(k,f(k))$ the associated energy can be written as (note $v^2=\vec{v}^2$)
\begin{equation}
E=\frac{m}{2}v^2+mg\cdot f(k).
\end{equation}
Dividing by $mg$ we can associate the expression $E/mg$ with a maximal reachable height $L$ of the particle, i.e.
\begin{equation}\label{eq:L_heigt}
L=\frac{E}{mg}=\frac{v^2}{2g}+f(k).
\end{equation}
For the radius $R(k)$ of two consecutive foci points with common point of reflection $(k,f(k))$ thus holds

\begin{equation}
R(k)=\frac{v^2}{2g}=L-f(k).
\end{equation}
The set of all circles of radius $R(k)$ with center $(k,f(k))$ along the mirror boundary form a domain (see Fig. \ref{fig:general_construction}), whose points are given via the parametric expression
\begin{equation}\label{eq:coordinates_domain}
(x(k,\vartheta),y(k,\vartheta))=(k+[L-f(k)]\cdot \cos(\vartheta),f(k)+[L-f(k)]\cdot \sin(\vartheta)),
\end{equation}
where $\vartheta \in [0;2\pi]$. It is clear that all possible flight parabola foci have to lie within this domain. 

\begin{figure}[htb]
\centering
\includegraphics[scale=0.7]{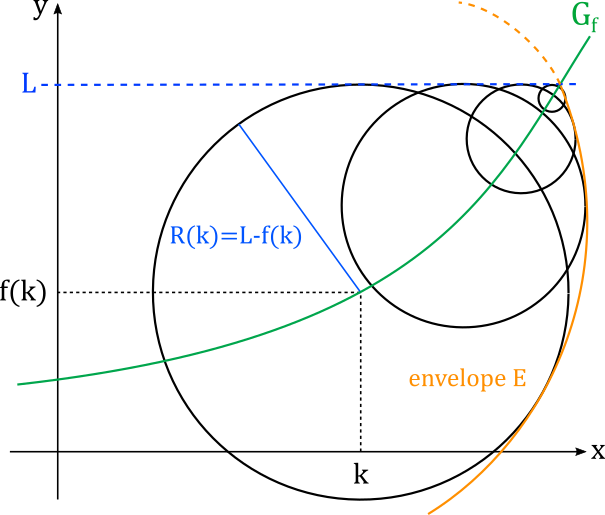}
\caption{Geometric construction of the maximal foci envelope $E$.}
\label{fig:general_construction}
\end{figure}

The domain (see Fig. \ref{fig:general_construction}) is confined by a non-trivial envelope curve $E$ which can be determined by solving the conditional equation \cite{Yates2012}
\begin{equation}{\label{eq:parametric_cond_1}}
\frac{\partial x}{\partial k}\cdot \frac{\partial y}{\partial \vartheta}-\frac{\partial y}{\partial k}\cdot \frac{\partial x}{\partial \vartheta}=0,
\end{equation}
for the variable $\vartheta$. Plugging Eq. \eqref{eq:coordinates_domain} in the conditional equation \eqref{eq:parametric_cond_1} this is equivalent to
\begin{equation}
[L-f(k)]\cdot [\cos(\vartheta)+f'(k)\cdot \sin(\vartheta)-f'(k)]=0.
\end{equation}
Solutions to the last equation are given by
\begin{equation}
\vartheta =\frac{\pi}{2}~~~~~\mbox{or}~~~~~\vartheta =\arctan\left(\frac{2f'(k)}{1+f'(k)^2},\frac{f'(k)^2-1}{f'(k)^2+1}\right),
\end{equation}
where $\vartheta=\frac{\pi}{2}$ simply corresponds to the cases where the foci lie on the straight line $f(k)=L$ which is clear from the construction but of no further relevance in our considerations. By definition $\arctan(a,b)$ gives the arc tangent of $\frac{b}{a}$, taking into account which quadrant the point $(a,b)$ is in \cite{Wolfram}.
Using the well-known identities
\begin{align}
\cos(\arctan(a,b))&=\frac{a}{\sqrt{a^2+b^2}},\\
\sin(\arctan(a,b))&=\frac{b}{\sqrt{a^2+b^2}},
\end{align}
the envelope curve thus can be written in parametric form as 
\begin{equation}{\label{eq:foci_envolute}}
(x(k),y(k))=\left(k+\frac{[L-f(k)]\cdot 2f'(k)}{1+f'(k)^2},f(k)+\frac{[L-f(k)]\cdot (f'(k)^2-1)}{1+f'(k)^2}\right).
\end{equation}
So far we have found a parametric expression for the, as we call it, envelope curve of the foci circles along the mirror boundary depending on an additional parameter $L$ associated with a maximal reachable height of the particle. Note, as indicated in Fig. \ref{fig:general_construction} by the dashed orange curve, the parametric expression for the envelope yields an analytic continuation for values where $f(k)>L$ which will be of physical relevance later on. This envelope curve given by Eq. \eqref{eq:foci_envolute} has now a practical implication stated in the following theorem.

\begin{theorem}{\label{thm:foci_curve}}
Given some ideal reflecting mirror boundary described by some function $f(x)$ and an initial particle position with flight parabola focal coordinates $(x_{F_0},y_{F_0})$ and associated focal length $f=\frac{v_x^2}{2g}$. Then holds: If the parameter $L$ of the envelope curve given by Eq. \eqref{eq:foci_envolute} is matched such that for a given $k$ holds $(x(k),y(k))=(x_{F_0},y_{F_0})$, then all consecutive flight parabola foci points lie on the same envelope curve with fixed $L$. In that case we call the envelope curve the foci curve denoted by $(x_F(k),y_F(k))$.
\end{theorem}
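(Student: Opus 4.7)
The plan is to argue by induction on the bounce index, reducing the claim to showing that if $F_0$ lies on the envelope curve for some value of $L$, then the next focus $F_1$ lies on the same envelope curve. I would begin by recording the envelope condition in a form convenient for verification: from Eq.~\eqref{eq:foci_envolute}, a point $F = (x_F, y_F)$ lies on the envelope at parameter $k$ precisely when $(x_F - k)^2 + (y_F - f(k))^2 = (L - f(k))^2$ and $x_F - k = (L - y_F)\, f'(k)$, the former being the requirement that $F$ sits on the circle of radius $L - f(k)$ centred at $(k, f(k))$, the latter being the tangency condition equivalent to Eq.~\eqref{eq:parametric_cond_1}.

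To execute the induction step I would use the two ingredients assembled in Section~\ref{sec:generalities}. First, if the flight parabola with focus $F_0$ hits the mirror at $P_1 = (k_1, f(k_1))$ with tangent angle $\alpha_1 = \arctan f'(k_1)$, then $F_0$ and $F_1$ both lie on the physical reflection circle centred at $P_1$ of radius $v^2/(2g) = E/(mg) - f(k_1)$. Second, the linear reflection rule derived in Section~\ref{sec:generalities} sends $F_0 - P_1$ to $F_1 - P_1$; equivalently, $F_1$ is the reflection of $F_0$ across the straight line through $P_1$ with direction $(\sin(2\alpha_1), -\cos(2\alpha_1))$. Substituting the envelope parametric form for $F_0$ at some parameter $k_0$ into this reflection expresses $F_1$ in closed form in terms of $k_0$, $k_1$ and $L$, and I would then seek a new parameter $k_1'$ at which the two envelope equations are satisfied by $F_1$ with the same $L$.

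The main obstacle is locating $k_1'$. Concrete instances---most strikingly the parabolic mirror $f(x) = x^2/(4a)$, for which a short calculation shows the envelope with parameter $L$ to be a circle of radius $|a+L|$ centred at the mirror's focus $(0,a)$---demonstrate that $k_1'$ bears no simple closed-form relation to $k_0$ and $k_1$, so it must be defined implicitly through the tangency equation $x_{F_1} - k_1' = (L - y_{F_1}) f'(k_1')$. Once $k_1'$ has been fixed in this way, what remains is to verify that the magnitude equation $|F_1 - (k_1', f(k_1'))|^2 = (L - f(k_1'))^2$ follows automatically from the reflection relations and the envelope equations at $k_0$. This reduces to a trigonometric identity that I would unravel using the double-angle rewrites $\sin(2\alpha) = 2f'(k)/(1+f'(k)^2)$ and $\cos(2\alpha) = (1-f'(k)^2)/(1+f'(k)^2)$ together with the physical on-circle constraint $(x_{F_0} - k_1)^2 + (y_{F_0} - f(k_1))^2 = (E/(mg) - f(k_1))^2$, and it is here that I expect the computation to be the heaviest.
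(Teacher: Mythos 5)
Your setup is sound and is essentially an analytic transcription of the ingredients the paper uses geometrically: you correctly encode membership in the envelope \eqref{eq:foci_envolute} by the pair of conditions ``on the circle of radius $L-f(k)$ centred at $(k,f(k))$'' plus the tangency relation $x_F-k=(L-y_F)\,f'(k)$; the statement that $F_0$ and $F_1$ both lie on the circle of radius $H-f(k_1)$ about the reflection point is just the focus--directrix property; and your description of $F_1$ as the reflection of $F_0$ in the line through $P_1$ with direction $(\sin 2\alpha_1,-\cos 2\alpha_1)$ agrees with the paper's linear map (which is $-1$ times a reflection, i.e.\ reflection in the perpendicular line). The paper's own proof runs the same skeleton but in dual geometric language: an envelope point is characterised as a point of a circle tangent to $y=L$ at which the slope-$\tan(2\alpha)$ chord is itself tangent to that circle, and the figure is used to argue that the chord $FF'$ is tangent to two such circles.

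There is, however, a genuine gap: the step you defer \emph{is} the theorem. Once $k_1'$ is defined implicitly by the tangency equation, you must prove that the magnitude equation $\lvert F_1-(k_1',f(k_1'))\rvert^2=(L-f(k_1'))^2$ holds at that same $k_1'$, and this cannot ``reduce to a trigonometric identity'' by direct substitution precisely because, as you yourself note, $k_1'$ admits no closed form in $k_0$ and $k_1$. Verifying an equation at an implicitly defined point requires either an explicit candidate for $k_1'$ or an elimination/compatibility argument, and neither is sketched; for a generic point of the plane the tangency equation alone is solvable while the magnitude equation fails, so the whole content lies in exhibiting why $F_1$ is special. In effect you have restated the theorem in coordinates and asserted that the verification will succeed. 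A way to close the gap is to replace your two algebraic conditions by the single geometric characterisation they encode --- $F$ lies on the envelope at parameter $k$ if and only if $F$ is the reflection of the point $(k,L)$ in the mirror tangent at $(k,f(k))$, equivalently if and only if $L$ is a critical value of $k\mapsto \lvert F-(k,f(k))\rvert+f(k)$ --- and then bring in the optical property that reflecting a parabola's focus in one of its tangent lines lands on the directrix; this is the ``dual description'' the paper's proof invokes, and it is what supplies the new parameter value that your plan leaves undetermined.
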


\begin{proof}
For the proof consider Fig. \ref{fig:proof_main}.
\begin{figure}[!htb]
\centering
\includegraphics[scale=0.9]{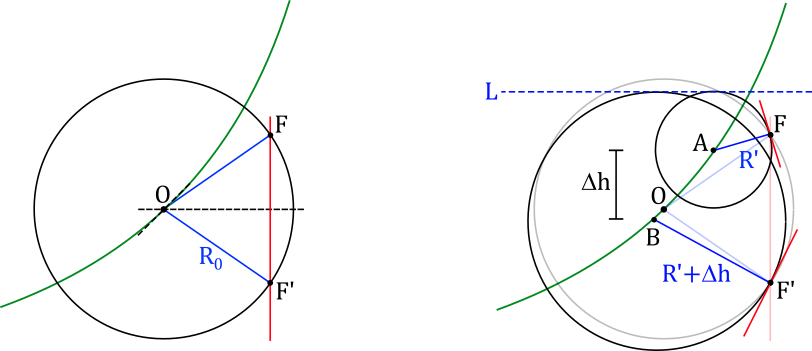}
\caption{Dual description of the possible flight parabola foci points for consecutive reflections.}
\label{fig:proof_main}
\end{figure}

Let $O$ be the point of reflection of two consecutive flight parabolas and $F$ be an initial focus point associated to one of the two parabolas. Then by the general construction procedure the consecutive focus point $F'$ is obtained by the intersection of the circle of radius $R_0$ centered at $O$ and the straight line passing through $F$ related to the slope of the mirror at $O$ (compare section \ref{sec:generalities}). 
In an equivalent description we find a value $L$ such that for a given point $A$ on the mirror the focus $F$ lies along the circle of radius $R'$ tangent to $y=L$ and the intersection line (red line see Fig. \ref{fig:proof_main} \textit{right}) also being tangent to this circle. In addition there exists another point $B$ with vertical distance $\Delta h$ with respect to $A$ with corresponding radius $R'+\Delta h$ and tangent intersection line (red) on which $F'$ must lie. This means that the circle around $B$ with radius $R'+\Delta h$ is also tangent to $y=L$ and in particular $F$ and $F'$ lie by construction on the foci curve described by $(x_F(k),y_F(k))$ for some fixed value of $L$. Since the point $O$ and initial focus point $F$ were chosen arbitrarily this construction must be true for every consecutive focus point thus proofing that all related flight parabola foci points lie along one common foci curve for some initially fixed value of $L$.
\end{proof}

The theorem yields a mighty result since the curve on which all foci points of the gravitational billiard system lie, is totally governed by the geometry of the ideal reflecting mirror. Thus by evaluating Eq. \eqref{eq:foci_envolute} and matching $L$ to the particle's initial (focus) condition yields the foci curve for all possible mirror reflections.

\section{Flight parabola confined domains}\label{sec:envelopes}

Having determined an expression for the foci curve of the flight parabolas in the last section we are now in the position to calculate the envelope curve restricting the confined domain in which the particle for a given initial focus position $(x_{F_0},y_{F_0})$ and energy $E$, related to a common directrix line $y=H$, will bounce. 

\begin{theorem}{\label{thm:envolute}}
Given a foci curve $(x_F(k),y_F(k))$ of the flight parabolas. The envelope curves $(x_E(k),y_E(k))$, restricting the allowed domain for all possible flight parabolas with common directrix line $H$, are given by
$$\left(x_F(k)+(H-y_F(k))\cdot J_\pm (k), \frac{H}{2}\cdot (1-J_\pm^2(k))+\frac{y_F(k)}{2}\cdot (1+J_\pm^2(k))\right),$$
where
$$J_\pm (k):= \frac{x_F'(k) \pm\sqrt{x_F'(k)^2+y_F'(k)^2}}{y_F'(k)}.$$
\end{theorem}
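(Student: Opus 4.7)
The plan is to treat the flight parabolas as a one-parameter family indexed by $k$ and apply the standard envelope condition. Since each parabola has focus $(x_F(k),y_F(k))$ and the common horizontal directrix $y=H$ (established in Section \ref{sec:generalities}), equating the distance from $(x,y)$ to the focus and to the directrix and squaring gives an implicit family
\begin{equation*}
F(x,y,k) \,:=\, \bigl(x-x_F(k)\bigr)^2 \,-\, \bigl(H-y_F(k)\bigr)\bigl(H+y_F(k)-2y\bigr) \,=\, 0.
\end{equation*}
The envelope is the locus on which $F=0$ and $\partial_k F=0$ hold simultaneously, and the task is to solve this pair for $(x,y)$ as a function of $k$.

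I would first differentiate $F$ with respect to $k$ (holding $x,y$ fixed). After cancellation, $\partial_k F=0$ reduces to the clean linear relation
\begin{equation*}
(x-x_F)\,x_F'(k) \,=\, y_F'(k)\,\bigl(y_F(k)-y\bigr).
\end{equation*}
Next, setting $t:=x-x_F(k)$, rewrite $F=0$ as $t^2 = (H-y_F)^2 + 2(H-y_F)(y_F-y)$ and use the envelope condition to substitute $y_F-y = t\,x_F'/y_F'$. This collapses the pair of conditions into a single quadratic in $t$ with coefficients depending only on $k$,
\begin{equation*}
y_F'(k)\,t^2 \,-\, 2\bigl(H-y_F(k)\bigr)\,x_F'(k)\,t \,-\, y_F'(k)\,\bigl(H-y_F(k)\bigr)^2 \,=\, 0.
\end{equation*}

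Applying the quadratic formula to this equation produces precisely $t=(H-y_F(k))\,J_\pm(k)$ with $J_\pm$ as defined in the statement, and hence $x_E(k)=x_F(k)+(H-y_F(k))J_\pm(k)$. To obtain $y_E(k)$, I would solve $F=0$ for $y$, getting $y=(H+y_F)/2 - t^2/[2(H-y_F)]$, and then substitute $t=(H-y_F)J_\pm$; the $(H-y_F)$ factor cancels neatly, leaving $y_E(k)=\tfrac{H}{2}(1-J_\pm^2)+\tfrac{y_F(k)}{2}(1+J_\pm^2)$ as claimed.

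The computation is essentially routine once the focus-directrix form of $F$ is in hand, so the main obstacle is bookkeeping: carrying the $\pm$ branches of the quadratic formula consistently through to the final parametrization, and identifying them geometrically as the upper and lower envelopes of the family (the two branches corresponding to the two tangents from each point of the envelope to neighboring parabolas). A minor subtlety is handling the degenerate points where $y_F'(k)=0$, which should be treated as removable by rewriting $J_\pm$ in the rationalized form $J_\pm = -y_F'/[x_F'\mp\sqrt{x_F'^2+y_F'^2}]$ whenever needed.
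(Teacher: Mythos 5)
Your proposal is correct and follows essentially the same route as the paper: the paper writes the family parametrically as $(x_F(k)+t,\ \tfrac{H+y_F(k)}{2}-\tfrac{t^2}{2(H-y_F(k))})$ and applies the Jacobian envelope condition, while you use the equivalent implicit focus--directrix form $F(x,y,k)=0$ with $\partial_k F=0$; both reduce to the same quadratic in $t=x-x_F(k)$ with roots $t_\pm=(H-y_F(k))J_\pm(k)$ and the same back-substitution for $y_E$. Your closing remarks on tracking the $\pm$ branches and on rationalizing $J_\pm$ at points where $y_F'(k)=0$ are sensible refinements, not deviations from the argument.
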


\newpage
\begin{proof}
The parametric curves on which the flight parabolas of focal length $f$ and focus $(x_F(k),y_F(k))$ lie, are given by
\begin{equation}
(x(k,t),y(k,t))=(x_F(k)+t,y_F(k)+f-\frac{t^2}{4f}),
\end{equation}
where the parameter $L$ appearing in $(x_F(k),y_F(k)$ (see Eq. \eqref{eq:foci_envolute}) has to be adjusted to the foci position of the initial flight parabola as stated in Theorem \ref{thm:foci_curve}. Energy conservation restricts all flight parabolas to have a common directrix line $H$ such that (compare section \ref{sec:generalities})
\begin{equation}
y_F(k)+2f=H~~~~\rightarrow ~~~~ f=\frac{H-y_F(k)}{2}.
\end{equation}
Inserting this equality in the above expression the curve of all possible flight parabolas as a function of $k$ and $t$ with common directrix $H$ is given by
\begin{equation}{\label{eq:envelope_parametric}}
(x(k,t),y(k,t))=\left(x_F(k)+t,\frac{H+y_F(k)}{2}-\frac{t^2}{2(H-y_F(k))}\right).
\end{equation}
The envelope curve is obtained by solving the conditional expression Eq. \eqref{eq:parametric_cond_1} in the variables $(k,t)$ again resulting in two solutions
\begin{equation}
t_\pm =(H-y_F(k))\cdot \left[\frac{x_F'(k)}{y_F'(k)}\pm \frac{\sqrt{x_F'(k)^2+y_F'(k)^2}}{y_F'(k)}\right]=:(H-y_F(k))\cdot J_\pm (k).
\end{equation}
Inserting the last result $t_\pm$ in Eq. \eqref{eq:envelope_parametric} yields the stated result for the envelope curves.
\end{proof}

It is interesting to note that for the flight parabolas in general, two distinct envelopes corresponding to $J_\pm(k)$ arise, which may intersect and such further restrict the allowed flight domain. In addition, it should be clear that the mirror described by $f(x)$ gives a natural boundary intersecting with the envelopes. For a graphical representation of the obtained results consider Fig. \ref{fig:envelopes}.

\begin{figure}[htb]
\centering
\includegraphics[scale=0.9]{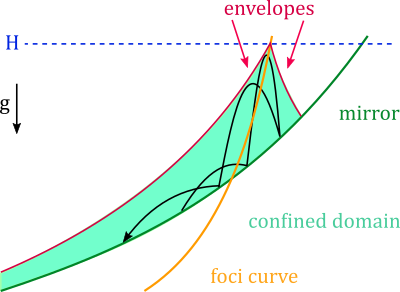}
\caption{Flight parabolas with common directrix line and same foci curve are restricted to a domain confined by two envelope curves and the mirror as a boundary.}
\label{fig:envelopes}
\end{figure}

\newpage
\section{Discussion and visualization of special cases}\label{sec:examples}

In this section, in order to illustrate our general results given by theorems \ref{thm:foci_curve} \& \ref{thm:envolute}, we discuss the corresponding foci curves and confined domains for three specific mirror boundaries. First a short review of the well-studied parabolic mirror, followed by the bouncing of a particle along a straight line tilted by some angle $\alpha$ and at last the case of a hyperbolic shaped mirror.

\subsection{Parabolic mirror revisited}
We start with a parabolic mirror of boundary given by $f(x)=\frac{1}{4f_m}x^2$. Here $f_m$ describes the focal length of the mirror. From Eq. \eqref{eq:foci_envolute} follows that the foci curve is given by
\begin{equation}
(x(k),y(k))=\left(k+\frac{[L-\frac{k^2}{4f_m}]\cdot \frac{k}{f_m}}{1+\frac{k^2}{4f_m^2}},\frac{k^2}{4f_m}+\frac{[L-\frac{k^2}{4f_m}]\cdot (\frac{k^2}{4f_m}-1)}{1+\frac{x^2}{4f_m^2}}\right).
\end{equation}

Defining
\begin{align}
\cos(\varphi)&=\frac{\frac{k}{f_m}}{1+\frac{k^2}{4f_m^2}},\\
\sin(\varphi)&=\frac{\frac{k^2}{4f_m^2}-1}{1+\frac{k^2}{4f_m^2}},
\end{align}
it turns out, that the foci curve can be written as
\begin{equation}
(x(\varphi),y(\varphi))=(R\cdot \cos(\varphi),R\cdot \sin(\varphi)+f_m),
\end{equation}
where the radius in this case is given by $R=f_m+L$. For a given initial focus position related to some fixed value for $L$ all consecutive flight parabola foci lie on a common circle of radius $R$ with center at $(0,f_m)$ as depicted e.g. in Fig \ref{fig:parabola_foci_envelope}. This analytic approach reflects the results obtained geometrically in \cite{Masalovich2020}.
\begin{figure}[!htb]
\centering
\includegraphics[scale=0.5]{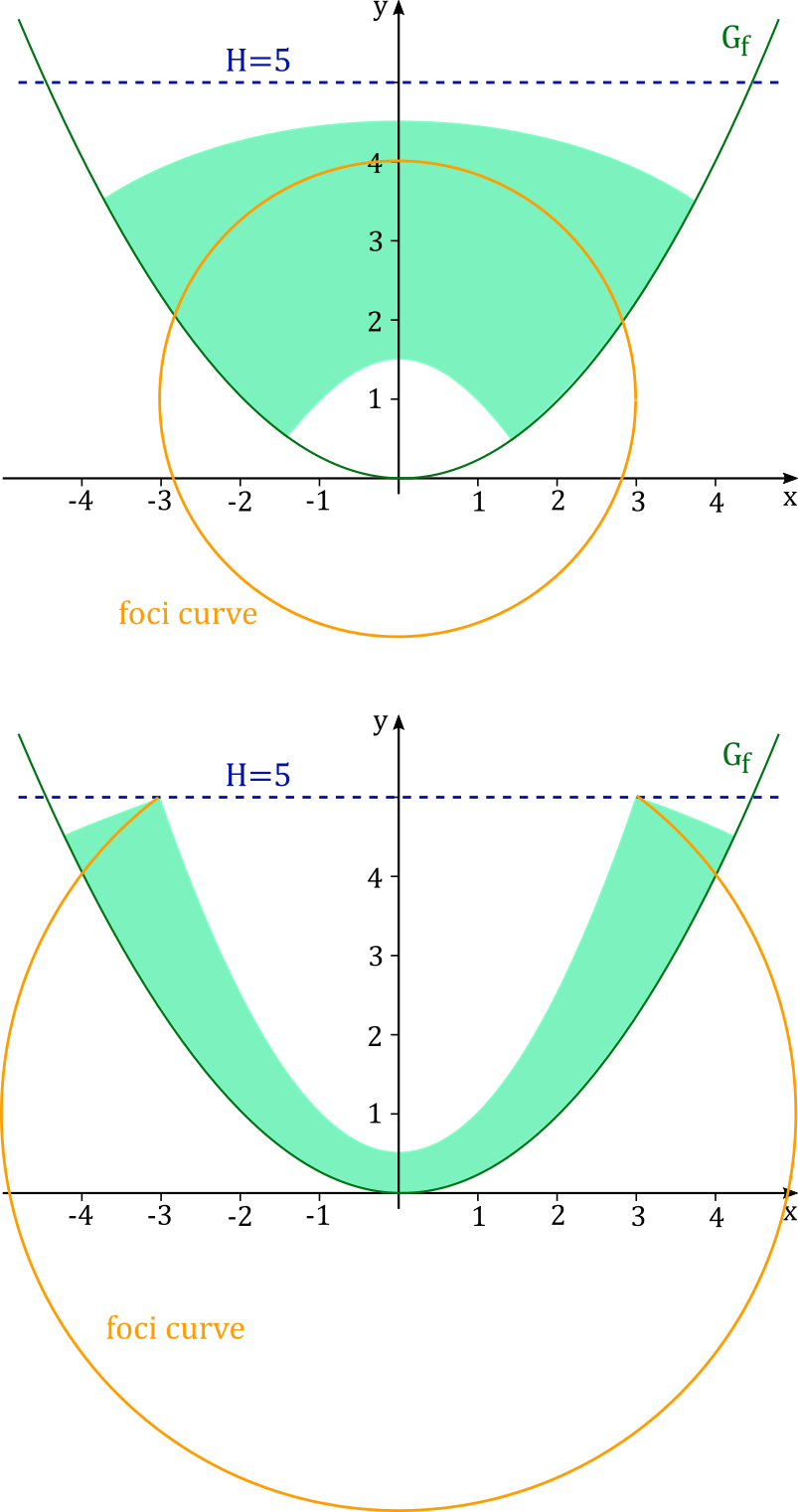}
\caption{Allowed flight parabola domains (green) obtained by the parabolic envelopes in cases $R=3$ (\textit{top}) and $R=5$ (\textit{bottom}) both for $H=5$ and parabolic mirror described by $f(x)=\frac{1}{4}x^2$ of focal length $f_m=1$.}
\label{fig:parabola_foci_envelope}
\end{figure}
A straight forward calculation (see Thm. \ref{thm:envolute}) including some redefinition of variables leads to the two envelope curves, confining the allowed particles flight domain, given by
\begin{equation}
(x_{E\pm}(k),y_{E\pm}(k))=\left(x,-\frac{x^2}{2(H-f_m\pm R)}+\frac{H+f_m\pm R}{2}\right),
\end{equation}
with $f(x)\leq H$ and consequently for the foci curve radius $0\leq R\leq H+f_m$. If the flight parabola performs a non-periodic orbit within the parabolic mirror, the trajectories are dense and swept out the entire domain between the mirror surfaces and the two envelopes corresponding to parabolas themselves (see Fig. \ref{fig:parabola_foci_envelope}).

Note that for $R\rightarrow 0$ the trajectory domain shrinks to a single parabola curve corresponding to a periodic orbit of the flight parabola.

\newpage
\subsection{Bouncing along a straight line}
As a second example we consider the case of the mirror being a straight line \newline ${f(x)=\tan(\alpha)\cdot x}$ of inclination angle $\alpha$ with respect to the horizontal (see Fig. \ref{fig:envelope_curve_line}).
\begin{figure}[htb] 
\centering
\includegraphics[scale=0.7]{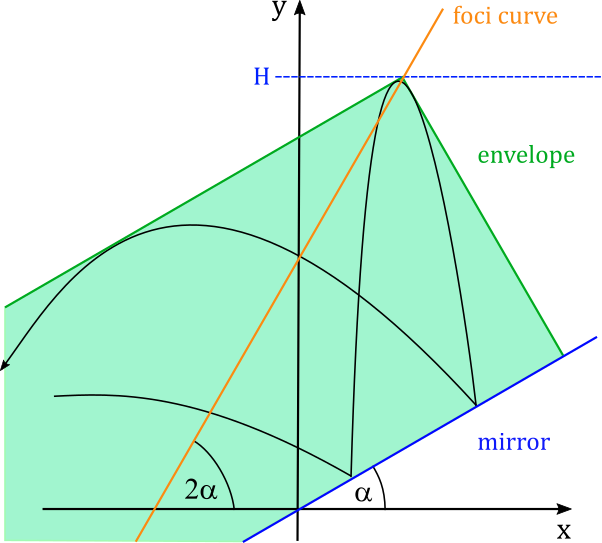}
\caption{Particle bouncing along a straight line mirror including foci curve as well as envelope curves resulting in a confined flight domain.}
\label{fig:envelope_curve_line}
\end{figure}

A direct calculation yields for the foci curve (compare Eq. \eqref{eq:foci_envolute})
\begin{equation}
(x_F(k),y_F(k))=\left(\frac{k\cdot (1-\tan^2(\alpha))+2L\cdot \tan(\alpha)}{1+\tan^2(\alpha)},\frac{2k\cdot \tan(\alpha)+L\cdot (\tan^2(\alpha)-1)}{1+\tan^2(\alpha)}\right).
\end{equation}
Via a suitable substitution of variables this can be rewritten in the form
\begin{equation}
(x_F,y_F)=(x,x\cdot \tan(2\alpha)-L\cdot \sec(2\alpha)) 
\end{equation}
corresponding to a straight line with slope $m=\tan(2\alpha)$ and $y-$intercept ${t=-L\cdot \sec(2\alpha)}$. For an initial flight parabola focus coordinate $(x_{F_0},y_{F_0})$, the parameter $L$ can be matched resulting in the final form
\begin{equation}
(x_F,y_F)=(x,x\cdot \tan(2\alpha)-x_{F_0}\cdot \tan(2\alpha)+y_{F_0}),
\end{equation}
for $\alpha \neq \frac{\pi}{2}$ and $x=x_{F_0}$ for $\alpha=\frac{\pi}{2}$.

%Concluding the foci for a givien initial position and initial velocity lie on a straight line of the form
%
%\begin{equation}
%y=\tan(2\alpha)\cdot x + t~~~~~\mbox{or}~~~~~x=const. ~~\mbox{for}~~\alpha=\frac{\pi}{2}.
%\end{equation}

Applying Thm. \ref{thm:envolute} the two corresponding flight parabola envelopes in terms of the initial flight parabola focus coordinate $(x_{F_0},y_{F_0})$ and directrix line $y=H$ read
\begin{equation}
(x_E,y_E)=(x,\pm (x-x_{F_0})\cdot \tan^{\pm 1}(\alpha)+\frac{y_{F_0}}{2}\cdot [1-\tan^{\pm 2}(\alpha)]+\frac{H}{2}\cdot [1+\tan^{\pm 2}(\alpha)]).
\end{equation}
Obviously, the envelopes correspond to two perpendicular straight lines of slope \newline $m_+=\tan(\alpha)$ and $m_-=-\cot(\alpha)=-\frac{1}{m_+}$  (see Fig. \ref{fig:envelope_curve_line}) intersecting along the directrix line with $y=H$. Note that the envelope line with $m_-$ represents the maximal allowed boundary of the flight parabola which, in general, must not be saturated by this setup.

The results obtained are in agreement with other approaches e.g. \cite{Anderson,Korsch1999}, where the envelope is calculated directly using the billiard map for the position and velocity of the flight parabolas in a wedge system or e.g. in \cite{Masalovich2020} for the special case of $\alpha=45^\circ$.

\newpage
\subsection{Bouncing inside a hyperbolic mirror}

As a last example, we briefly discuss the case of a hyperbolic mirror described by the function $f(x)=\sqrt{1+x^2}$. Obviously, we expect the confined flight parabola domain to be a kind of superposition between the two limiting cases where $f(x)\approx |x|$ for the large $x$ regime simply corresponding to a wedge system e.g. studied in \cite{Anderson} and $f(x)\approx 1+\frac{1}{2}x^2$ in the small $x$ limit resembling a parabolic mirror.

Direct calculation yields for the foci curve the parametric expression
\begin{equation}
(x_F(k),y_F(k))=\left(\frac{k\cdot (2L\sqrt{1+k^2}-1)}{1+2k^2},\frac{2\sqrt{1+k^2}^3-L}{1+2k^2}\right).
\end{equation}
In the large $k$ limit the curve approaches the two vertical asymptotes given by \newline $x_\pm =\pm L$. 

\begin{figure}[htb]
\centering
\includegraphics[scale=0.6]{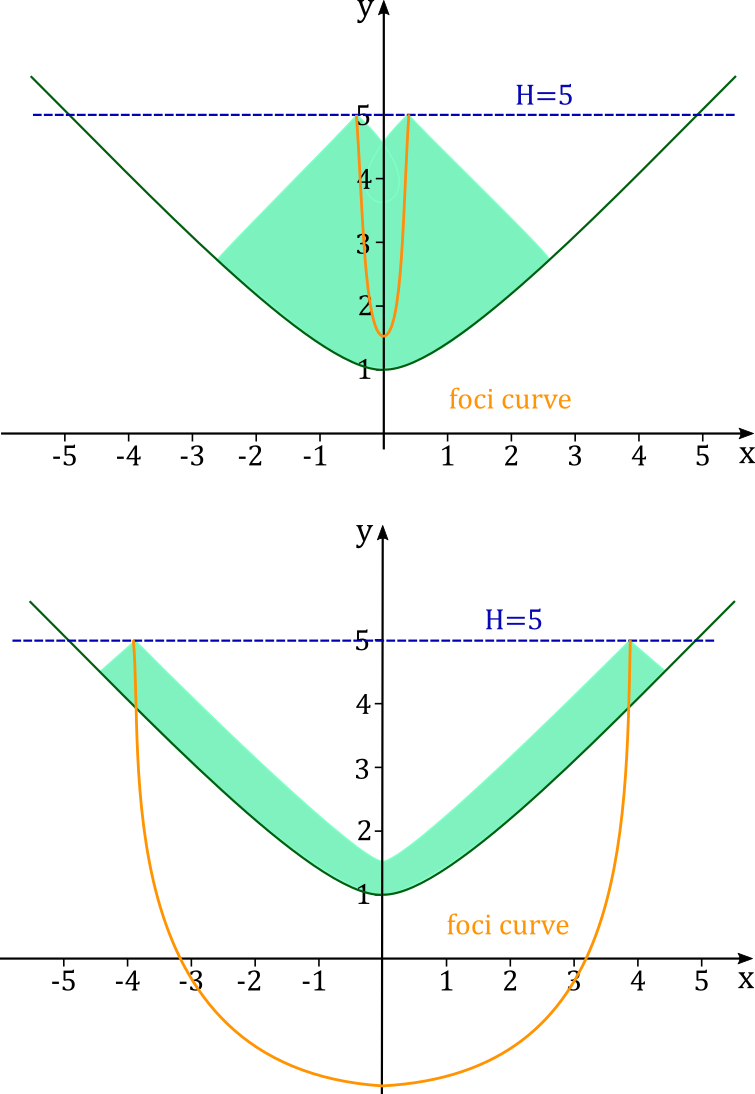}
\caption{Allowed flight parabola domains (green) obtained by the  envelopes in cases $L=0,5$ (\textit{top}) and $L=4$ (\textit{bottom}) both for $H=5$ and hyperbolic mirror described by $f(x)=\sqrt{1+x^2}$ .}
\label{fig:hyperbola_foci_envelope}
\end{figure}

Similar to the foci curve the associated envelope curves can be calculated according to Thm. \ref{thm:envolute}, where in this setup holds
\begin{equation}
J_\pm (k)=\frac{\sqrt{k^2+1}\cdot \left(2k^2-1\pm \sqrt{\frac{\left(2k^2+1\right)^2\left(4L\left(L+\sqrt{k^2+1}\left(2k^2-1\right)\right)+4k^6-3k^2+1\right)}{k^2+1}}\right)+2L}{2k\left(2L\sqrt{k^2+1}+2k^4+k^2-1\right)}.
\end{equation}

Again, considering the cases where the flight parabolas are dense within the given boundary, Fig \ref{fig:hyperbola_foci_envelope} shows (green domains) the swept out flight domains for the common directrix value $H=5$ but different initial starting positions associated with the parameter $L$.
As expected, the system behaves similar to the wedge case ($f(x)\approx |x|$) but with distinct non-accessible regions corresponding to the continuous deformation at the bottom of the mirror boundary.

\section{Conclusion and Outlook}\label{sec:conclusion}

We provided a novel approach to calculate the envelopes and associated confined domains of a bouncing particle under the influence of gravity being reflected along a 2D mirror described by some function $f$. Thereby, with our general results stated in theorems \ref{thm:foci_curve} \& \ref{thm:envolute}, we can reproduce and generalize the results for the confined domains already obtained in \cite{Anderson,Masalovich2014,Masalovich2020}. 

For future works e.g. it would be interesting to determine the mirror function $f(x)$ such that the foci curve represents an ellipse and the associated flight parabola vertex curve a circle.  This would implement a dual example to the parabolic mirror where the foci curve, as discussed, resembles a circle and the corresponding flight parabola vertex curve is given by an ellipse. Also, the precise mirror geometry corresponding to a parabolic foci curve would be of interest. From the results presented in this work it is clear that the associated mirror in the last case should be asymptotic to a straight line of slope 1.

Since all foci points have to lie along the foci curve despite a periodic or non periodic orbit it might be of interest the associated foci mapping $F_n\rightarrow F_{n+1}$ for consecutive flight parabolas. In the periodic case these should form a finite set. A study of their geometric behaviour and stability properties would be of interest.

Generalizations to non-constant gravitational forces, such as the gravitational field generated by a spherical mass distribution, could also be considered.

%%%%%%%%%%%%%%%%%%%%%%%%%%%%%%%%%%%%%%%%%%%%%%%%%%%%%%%%%%%%%%%%%%%%%%%%%%%%%%%%%%%%%%%%%%%%%%%%%%%%%%%%%%%%%%%%%%%%%%
%
%\section*{Acknowledgements}
%We would like to thank ... for the useful discussions.
%
%%%%%%%%%%%%%%%%%%%%%%%%%%%%%%%%%%%%%%%%%%%%%%%%%%%%%%%%%%%%%%%%%%%%%%%%%%%%%%%%%%%%%%%%%%%%%%%%%%%%%%%%%%%%%%%%%%%%%%

%%%%%%%%%%%%%%%%%%%%%%%%%%%%%%%%%%%%%%%%%%%%%%%%%%%%%%%%%%%%%%%%%%%%%%%%%%%%%%%%%%%%%%%%%%%%%%%%%%%%%%%%%%%%%%%%%%%%%%

%\section{Appendix}

\vspace*{1cm}
%\section{Literature}


\begin{thebibliography}{9}

\bibitem{Anderson}
K.D. Anderson, \textit{Dynamics of a Rotated Orthogonal Gravitational Wedge Billiard}, {\url{
https://doi.org/10.48550/arXiv.2206.04997}}



\bibitem{Costa2015}
D.R. da Costa, C.P. Dettmann \& E.D. Leonel, \textit{Circular, elliptic and oval billiards in a gravitational field}, Communications in Nonlinear Science and Numerical Simulation, Volume 22, Issues 1-3, May 2015, Pages 731-746, {\url{https://doi.org/10.1016/j.cnsns.2014.08.030}}

\bibitem{Jaud2022}
D. Jaud, \textit{Classical and quantum billiards inside the square with gravitational field}, {\url{
https://doi.org/10.13140/RG.2.2.26293.96487
}}

\bibitem{Korsch1999}
H.J. Korsch \& H.J. Jodl, \textit{Gravitational Billiards: The Wedge}, In: Chaos. Springer, Berlin, Heidelberg (1999), {\url{https://doi.org/10.1007/978-3-662-03866-6_4}}

\bibitem{Lehihet1986}
H.E. Lehihet \& B.N. Miller, \textit{Numerical study of a billiard in a gravitational field}, Physica D21 (1986) 93-104.

\bibitem{Masalovich2014}
S. Masalovich, \textit{A remarkable focusing property of a parabolic mirror for neutrons in the gravitational field: Geometric proof},
Nuclear Instruments and Methods in Physics Research Section A: Accelerators, Spectrometers, Detectors and Associated Equipment,
Volume 763,
2014,
Pages 517-520,
ISSN 0168-9002, \newline
{\url{https://doi.org/10.1016/j.nima.2014.07.004}}


\bibitem{Masalovich2020}
S. Masalovich, \textit{Billiards in a gravitational field: A particle bouncing on a parabolic and right angle mirror}, arXiv:2007.04730v2, \url{
https://doi.org/10.48550/arXiv.2007.04730}


\bibitem{Wolfram}
WolframAlpha {\url{https://reference.wolfram.com/language/ref/ArcTan.html}}


\bibitem{Yates2012}
R.C. Yates, \textit{Handbook on Curves and Their Properties}, Literary Licensing, LCC (2012)
\end{thebibliography}
\end{document}